\newtheorem{theorem}{Theorem}[section]
\newtheorem{thm}[theorem]{Theorem}
\newtheorem{lemma}[theorem]{Lemma}
\newtheorem{lem}[theorem]{Lemma}
\newtheorem{proposition}[theorem]{Proposition}
\newtheorem{corollary}[theorem]{Corollary}
\theoremstyle{definition}
\newtheorem{defn}[theorem]{Definition}
\theoremstyle{remark}
\newtheorem{rem}[theorem]{Remark}
\numberwithin{equation}{section}
 \DeclareMathAlphabet{\mathpzc}{OT1}{pzc}{m}{it}
  \newcommand{\dif}{\mathrm{d}}
 \newcommand{\E}{\mathbb{E}}            % expectation
 \newcommand{\R}{\mathbb{R}}
 \newcommand{\PP}{\mathbb{P}}
 \newcommand{\mcl}{\mathcal}
 \newcommand{\Be}{\begin{equation}}
 \newcommand{\Ee}{\end{equation}}
 \newcommand{\Bs}{\begin{split}}
 \newcommand{\Es}{\end{split}}
  \newcommand{\Bes}{\begin{equation*}}
 \newcommand{\Ees}{\end{equation*}}
 \newcommand{\BT}{\begin{thm}}
 \newcommand{\ET}{\end{thm}}
 \newcommand{\Bp}{\begin{proof}}
 \newcommand{\Ep}{\end{proof}}
 \newcommand{\BL}{\begin{lem}}
 \newcommand{\EL}{\end{lem}}
 \newcommand{\BP}{\begin{proposition}}
 \newcommand{\EP}{\end{proposition}}
 \newcommand{\BC}{\begin{corollary}}
 \newcommand{\EC}{\end{corollary}}
 \newcommand{\BR}{\begin{rem}}
 \newcommand{\ER}{\end{rem}}
 \newcommand{\BD}{\begin{defn}}
 \newcommand{\ED}{\end{defn}}
 \newcommand{\BI}{\begin{itemize}}
 \newcommand{\EI}{\end{itemize}}
 \newcommand{\tl}{\tilde}
\begin{document}
\title[Smooth densities of perturbed diffusion processes]
{Smooth densities of the laws of perturbed diffusion processes}
\author[L. Xu]{Lihu Xu}
\address{Department of Mathematics,
Faculty of Science and Technology,
University of Macau, E11
Avenida da Universidade, Taipa,
Macau, China}
\email{lihuxu@umac.mo}
\author[W. Yue]{Wen Yue}
\address{Institute of Analysis and Scientific Computing,
Vienna University of Technology,
Wiedner Hauptstrass 8-10,
1040 Wien, Austria
}
\email{wenyue@hotmail.co.uk}

\author[T.Zhang]{Tusheng Zhang}
\address{School of Mathematics,University of Manchester
Oxford Road,
Manchester M13 9PL,
United Kingdom}
\email{tusheng.zhang@manchester.ac.uk}
\thanks{Lihu Xu is supported by the following grants: Science and Technology Development Fund Macao S.A.R FDCT  049/2014/A1,  MYRG2015-00021-FST}

%\address{Department of Mathematics, Brunel University,
%Kingston Lane,
%Uxbridge,
%Middlesex UB8 3PH, United Kingdom}
%\email{Lihu.Xu@brunel.ac.uk}
%\address{Department of Mathematics, Brunel University,
%Kingston Lane,
%Uxbridge,
%Middlesex UB8 3PH, United Kingdom}
%\email{Lihu.Xu@brunel.ac.uk}

\begin{abstract} \label{abstract}
Under some regularity conditions on $b$, $\sigma$ and $\alpha$, we prove that the following perturbed stochastic differential equation
\Be \label{e:PerEqn}
X_t=x+\int_0^t b(X_s) \dif s+\int_0^t \sigma(X_s) \dif B_s+\alpha \sup_{0 \le s \le t} X_s, \ \ \ \alpha<1
\Ee
admits smooth densities for all $0 \le t \le t_0$, where $t_0>0$ is some finite number.  \\
{\bf Keywords}:
Perturbed diffusion processes, Malliavin differentiability, Smooth density. \\
{\bf Mathematics Subject Classification (2000)}: \ {60H07}. \\
\end{abstract}
\maketitle
\section{Introduction}
There have been a considerable body of literatures devoted to the study of perturbed stochastic differential equations(SDEs), see \cite{CPY}-\cite{GY1},\cite{PW},\cite{YZ}. Let $(\Omega, \mathcal{F}, \{\mathcal{F}_{t}\}_{t\geq 0},\PP)$ be a filtered probability space with filtration$\{\mathcal{F}_{t}\}_{t\geq 0}$, let $\{B_{t}\}_{t\geq0}$ be a one-dimensional standard $\{\mathcal{F}_{t}\}_{t\geq 0}$-Brownian Motion. Suppose that $\sigma(x), b(x)$ are Lipschitz continuous functions on $\R$. It was proved in \cite{RDZ} that the following perturbed stochastic differential equation:
\Be \label{e:PerEqn}
X_t=x+\int_0^t b(X_s) \dif s+\int_0^t \sigma(X_s) \dif B_s+\alpha \sup_{0 \le s \le t} X_s,\ \   \forall \ \alpha<1,
\Ee
admits a unique solution. If $|\sigma(x)|>0$, it was shown in \cite{YZ} that the law of $X_t$ is absolutely continuous with respect to Lebesgue measure, i.e. the law of $X_t$ admits a density for $t>0$.
\vskip 3mm

There seem no smooth density results for the law of a perturbed diffusion process, this paper aims to partly fill in this gap. The smoothness of  densities is a popular topic in stochastic analysis and has been intensively studied for several decades, we refer readers to \cite{N}, \cite{S} and references therein.
Our approach to proving the smoothness of densities is by Malliavin calculus, so let us first recall some well known results on Malliavin calculus \cite{N} to be used in this paper.
\vskip 3mm

Let $\Omega=C_{0}(\R_{+})$ be the space of continuous functions on $\R_{+}$ which are zero at zero. Denote by $\mathcal{F}$ the Borel $\sigma$-field on $\Omega$ and $\PP$ the Wiener measure, then the canonical coordinate process \{$\omega_{t}$, $t\in \R_{+}$\} on $\Omega$ is a Brownian motion $B_t$.
 Define $\mathcal{F}_{t}^{0}=\sigma(B_{s},s\leq t)$ and denote by $\mathcal{F}_{t}$ the completion of $\mathcal{F}_{t}^{0}$ with respect to the $\PP$-null sets of $\mathcal{F}.$

Let $H:=L^{2}(\R_{+},\mathcal{B},\mu)$ where $(\R_{+},\mathcal{B})$ is a measurable space with $\mathcal{B}$ being the Borel $\sigma$-field of $\R_{+}$ and $\mu$ being the Lebesgue measure on $\R_{+}$, we denote the norm of $H$ by $\|.\|_H$. For any $h \in H$, $W(h)$ is defined by
\begin{equation}
W(h)=\int_{0}^{\infty}h(t)\dif B_{t},
\end{equation}
note that $\{W(h),h\in H\}$ is a Gaussian Process on $H$.\\

We denote by $C_{p}^{\infty}(\mathbb{R}^{n})$ the set of all infinitely differentiable functions $f:\mathbb{R}^{n}\rightarrow\mathbb{R}$ such that $f$ and all of its partial derivatives have polynomial growth.
Let $\mcl S$ be the set of smooth random variables defined by
\begin{equation*}
\mcl S=\{F=f(W(h_{1}),...,W(h_{n}));\ h_{1},...,h_{n}\in H,n\geq 1, f\in C_{p}^{\infty}(\R^{n})\}.
\end{equation*}
Let $F\in \mcl S$, define its Malliavin derivative $D_{t}F$ by
\begin{equation}
D_{t}F=\sum_{i=1}^{n}\partial_{i}f(W(h_{1}),...,W(h_{n}))h_{i}(t),
\end{equation}
and its norm by
\
\begin{equation*}
||F||_{1,2}=[\E(|F|^{2})+\E(||DF||_H^{2})]^{\frac{1}{2}},
\end{equation*}
where $||DF||_H^{2}=\int_0^\infty |D_t F|^2 \mu(\dif t)$.
Denote by $\mathbb{D}^{1,2}$ the completion of $\mcl S$ under the norm $\|.\|_{1,2}$. We further
define the norm
\
\begin{equation*}
||F||_{m,2}=\left[\E(|F|^{2})+\sum_{k=1}^m \E(\|D^kF\|_{H^{\otimes k}}^{2})\right]^{\frac{1}{2}}.
\end{equation*}
Similarly, $\mathbb{D}^{m,2}$ denotes the completion of $\mcl S$ under the norm $||.||_{m,2}$.
\vskip 3mm

 We shall use the following two propositions:

\begin{proposition}[Proposition 1.2.3 of \cite{N}]\label{estimate of composition}
Let $\phi:\R^d\to R$ be a continuously differentiable function with bounded partial derivatives. Suppose that $F=(F^1,\cdots,F^d)$ is a random vector whose components belong to the space $\mathbb{D}^{1,2}$. Then $\phi(F) \in \mathbb{D}^{1,2}$, and
\begin{equation*}
D(\phi(F))=\sum_{i=1}^{d}\partial_i\phi(F)DF^i.
\end{equation*}
\end{proposition}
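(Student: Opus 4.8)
The plan is to prove the identity first for smooth random variables, where it collapses to the ordinary chain rule, and then to remove the extra regularity by two successive limiting arguments: one that relaxes the smoothness of $\phi$ and one that relaxes the smoothness of $F$. Throughout, the essential structural fact I would invoke is that the operator $D$, defined on $\mcl S$ by the formula in the introduction, is closable; this closability is exactly what makes the completion $\mathbb{D}^{1,2}$ and the extension of $D$ to it well posed, and it is what will license passing derivatives through limits.

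First I would treat $F$ with components in $\mcl S$ together with $\phi\in C_p^\infty(\R^d)$. Writing each component over a common system, $F^i=f_i(W(h_1),\dots,W(h_n))$ with $h_1,\dots,h_n\in H$, the composition $\phi(F)=\bigl(\phi\circ(f_1,\dots,f_d)\bigr)(W(h_1),\dots,W(h_n))$ again lies in $\mcl S$, and applying the definition of $D$ together with the classical chain rule for $\phi\circ(f_1,\dots,f_d)$ gives $D(\phi(F))=\sum_{i=1}^d\partial_i\phi(F)\,DF^i$ at once. Next I would keep $F$ smooth but allow $\phi\in C^1$ with only bounded derivatives, by mollifying: set $\phi_n=\phi*\alpha_n$ for standard mollifiers $\alpha_n$, so that $\phi_n\in C^\infty$, $\sup_x|\partial_i\phi_n(x)|\le\sup_x|\partial_i\phi(x)|$, $\phi_n\to\phi$ uniformly (as $\phi$ is Lipschitz), and $\partial_i\phi_n\to\partial_i\phi$ pointwise (as $\partial_i\phi$ is continuous). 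The previous case gives $D(\phi_n(F))=\sum_i\partial_i\phi_n(F)\,DF^i$; letting $n\to\infty$, uniform convergence yields $\phi_n(F)\to\phi(F)$ in $L^2(\Omega)$, and bounded pointwise convergence of $\partial_i\phi_n(F)$ together with dominated convergence yields $\sum_i\partial_i\phi_n(F)\,DF^i\to\sum_i\partial_i\phi(F)\,DF^i$ in $L^2(\Omega;H)$. Closability of $D$ then forces $\phi(F)\in\mathbb{D}^{1,2}$ with the claimed derivative.

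Finally I would handle general $F$ with components in $\mathbb{D}^{1,2}$. By definition of the completion, I can choose $F^{i,n}\in\mcl S$ with $F^{i,n}\to F^i$ in $\mathbb{D}^{1,2}$; the case just proved applies to $F^n=(F^{1,n},\dots,F^{d,n})$, giving $D(\phi(F^n))=\sum_i\partial_i\phi(F^n)\,DF^{i,n}$. Since $\phi$ is globally Lipschitz, $\phi(F^n)\to\phi(F)$ in $L^2(\Omega)$. For the derivative I would use the splitting
\[
\partial_i\phi(F^n)\,DF^{i,n}-\partial_i\phi(F)\,DF^i=\partial_i\phi(F^n)\bigl(DF^{i,n}-DF^i\bigr)+\bigl(\partial_i\phi(F^n)-\partial_i\phi(F)\bigr)DF^i.
\]
The first term is dominated by $\sup|\partial_i\phi|\cdot\norm{DF^{i,n}-DF^i}_H$, which tends to $0$ in $L^2(\Omega;H)$; the second, after passing to an a.s.\ convergent subsequence of $F^n\to F$, tends to $0$ by dominated convergence, using that $\partial_i\phi$ is bounded and continuous. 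Closability of $D$ once more gives $\phi(F)\in\mathbb{D}^{1,2}$ and the formula.

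I expect the main obstacle to be precisely this last gradient limit in $L^2(\Omega;H)$. The first piece is routine because it is controlled by a norm bound on $\phi$, but the second piece $\bigl(\partial_i\phi(F^n)-\partial_i\phi(F)\bigr)DF^i$ cannot be handled by such a bound alone; it requires combining continuity and boundedness of $\partial_i\phi$ with a subsequence-plus-dominated-convergence argument, all while the entire scheme rests on the closability of $D$ that underlies the definition of $\mathbb{D}^{1,2}$.
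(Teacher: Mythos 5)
The paper does not prove this proposition: it is quoted verbatim as Proposition 1.2.3 of Nualart's book and used as a black box. Your argument is correct and is essentially the standard proof given in that reference --- establish the chain rule on $\mcl S$ for $\phi\in C_p^\infty$, mollify to relax $\phi$ to $C^1$ with bounded derivatives, then approximate $F$ by smooth random variables and pass to the limit via the closability of $D$ (with the subsequence/dominated-convergence step for $\bigl(\partial_i\phi(F^n)-\partial_i\phi(F)\bigr)DF^i$ handled exactly as you describe).
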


\begin{proposition}[Proposition 2.1.5 of \cite{N}] \label{t:Cri}
If $F \in \mathbb{D}^{\infty,2}$ with $\mathbb{D}^{\infty,2}=\cap_{m \ge 1} \mathbb{D}^{m,2}$
and $\|DF\|_H^{-1} \in \cap_{p\geq 1}L^p(\Omega)$, then the density of $F$ belongs to the infinitely continuously differentiable function space $C^\infty(\R)$.
\end{proposition}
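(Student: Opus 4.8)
The plan is to deduce the $C^\infty$ smoothness of the density of $F$ from rapid decay of its characteristic function, which in turn will follow from an \emph{iterated integration-by-parts formula}. The basic tool is the divergence operator $\delta$, the adjoint of the Malliavin derivative $D$, characterized by the duality $\E\big[\langle DG, u\rangle_H\big] = \E\big[G\,\delta(u)\big]$, valid for $G \in \mathbb{D}^{1,2}$ and $u$ in the domain of $\delta$.

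First I would derive the one-step formula. For $\phi \in C^\infty$ with bounded derivatives of all orders, Proposition \ref{estimate of composition} gives $D(\phi(F)) = \phi'(F)\,DF$, whence $\langle D(\phi(F)), DF\rangle_H = \phi'(F)\,\|DF\|_H^2$. The nondegeneracy hypothesis $\|DF\|_H^{-1} \in \cap_{p\geq 1}L^p(\Omega)$ makes $\|DF\|_H^{-2}$ finite almost surely, so dividing by it and invoking the duality with $G = \phi(F)$ and $u = \|DF\|_H^{-2}DF$ gives, for any smooth weight $G$,
\begin{equation*}
\E\big[\phi'(F)\,G\big] = \E\Big[\big\langle D(\phi(F)),\, G\,\|DF\|_H^{-2}DF\big\rangle_H\Big] = \E\big[\phi(F)\,\delta\big(G\,\|DF\|_H^{-2}DF\big)\big].
\end{equation*}
Writing $\mathcal{H}(G) := \delta\big(G\,\|DF\|_H^{-2}DF\big)$ and iterating, one obtains for every $k \geq 1$ the identity $\E[\phi^{(k)}(F)] = \E[\phi(F)\,H_k]$ with $H_0 = 1$ and $H_k = \mathcal{H}(H_{k-1})$.

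The crux of the argument, and the step I expect to be the main obstacle, is to verify that this iteration is legitimate and that each $H_k$ lies in every $L^p(\Omega)$. Here both hypotheses are indispensable: from $F \in \mathbb{D}^{\infty,2}$ together with $\|DF\|_H^{-1} \in \cap_{p\geq 1}L^p(\Omega)$ one first shows $\|DF\|_H^{-2} \in \mathbb{D}^{\infty,2}$, by applying Proposition \ref{estimate of composition} to smooth approximations of $x \mapsto x^{-1}$ and controlling the resulting derivatives by negative powers of $\|DF\|_H$. Consequently $G\,\|DF\|_H^{-2}DF$ is again a smooth $H$-valued random variable whenever $G$ is smooth, and Meyer's inequalities — the $L^p$-continuity of $\delta$ on the spaces $\mathbb{D}^{k,p}(H)$ — keep each $\mathcal{H}(H_{k-1})$ in $\cap_{p\geq 1}L^p(\Omega)$. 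This closes the recursion and makes the identity $\E[\phi^{(k)}(F)] = \E[\phi(F)\,H_k]$ rigorous for every $k$ and every $\phi \in C^\infty$ with bounded derivatives.

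Finally I would apply the identity to $\phi(y) = e^{\im \xi y}$ (equivalently, separately to $\cos(\xi\,\cdot)$ and $\sin(\xi\,\cdot)$, which have bounded derivatives of all orders). Since $\phi^{(k)}(y) = (\im\xi)^k e^{\im\xi y}$, this yields
\begin{equation*}
|\xi|^k\,\big|\E[e^{\im\xi F}]\big| = \big|\E[e^{\im\xi F}\,H_k]\big| \leq \E|H_k| =: C_k < \infty,
\end{equation*}
so the characteristic function $\widehat{\mu}(\xi) := \E[e^{\im\xi F}]$ satisfies $|\widehat{\mu}(\xi)| \leq C_k\,|\xi|^{-k}$ for every $k$. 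Hence $\widehat{\mu}$ decays faster than any polynomial; in particular $\xi^m\,\widehat{\mu}(\xi) \in L^1(\R)$ for all $m \geq 0$, and Fourier inversion produces a density $p_F(y) = \frac{1}{2\pi}\int_{\R} e^{-\im\xi y}\,\widehat{\mu}(\xi)\,\dif\xi$ that belongs to $C^\infty(\R)$ with bounded derivatives of every order.
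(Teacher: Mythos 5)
The paper does not prove this proposition at all --- it is imported verbatim from Nualart \cite{N} (Proposition 2.1.5), so there is no internal proof to compare against. Your argument is the standard proof of that cited result and is essentially correct: the one-step duality identity, the recursion $H_k=\delta\big(H_{k-1}\|DF\|_H^{-2}DF\big)$ giving $\E[\phi^{(k)}(F)]=\E[\phi(F)H_k]$, and then rapid decay of the characteristic function plus Fourier inversion. Two remarks. First, you correctly flag the crux, but state it slightly too weakly: to close the recursion you need each $H_{k-1}$ not merely in every $L^p(\Omega)$ but in $\mathbb{D}^{1,p}$ for every $p$ (so that $H_{k-1}\|DF\|_H^{-2}DF\in\mathrm{Dom}\,\delta$ and Meyer's inequalities apply at the next stage); this in turn forces the hypothesis $F\in\mathbb{D}^{\infty}=\cap_{k,p}\mathbb{D}^{k,p}$, which is what Nualart actually assumes --- the paper's restatement with $\cap_{m}\mathbb{D}^{m,2}$ is, taken literally, too weak for this argument (an imprecision of the paper, not of your proof). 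Second, Nualart's own route differs at the last step: instead of the characteristic function he writes the density as $p(x)=\E\big[\one_{\{F>x\}}H_1\big]$ and differentiates under the expectation using the higher $H_k$; your Fourier-inversion ending is an equally standard alternative and arguably cleaner, at the cost of having to treat the complex exponential via $\cos$ and $\sin$ as you note.
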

Throughout this paper, for a bounded measurable function $f$, we shall denote
$$\|f\|_\infty=\sup_{x \in \R} |f(x)|. $$

\section{Main Results}
Throughout this paper, we need to assume $\alpha<1$ to guarantee that Eq. \eqref{e:PerEqn} has a unique solution \cite{RDZ}. Furthermore, it is shown in \cite{YZ} that
\
\begin{thm}(\cite[Theorem 3.1]{YZ})
Let $(X_t)_{t \ge 0}$ be the unique solution to Eq. \eqref{e:PerEqn}. Then $X_t \in \mathbb{D}^{1,2}$ for all $t>0$.
\end{thm}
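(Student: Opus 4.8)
The plan is to remove the nonlocal supremum by a pathwise reduction and then push the Malliavin derivatives through a Picard approximation, closing with the closability of the operator $D$. \emph{Reduction.} Set $M_t=\sup_{0\le s\le t}X_s$ and $Y_t=X_t-\alpha M_t=x+\int_0^t b(X_s)\dif s+\int_0^t\sigma(X_s)\dif B_s$, so that $Y$ is an Itô process and $X_t=Y_t+\alpha M_t$. For every $s\le t$, using $X_s\le M_s$, $1-\alpha>0$ and the monotonicity of $M$,
\[ Y_s=X_s-\alpha M_s\le (1-\alpha)M_s\le (1-\alpha)M_t, \]
with equality at any instant where $X$ attains its running maximum. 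Hence $\sup_{0\le s\le t}Y_s=(1-\alpha)M_t$, and therefore
\[ X_t=Y_t+\frac{\alpha}{1-\alpha}\sup_{0\le s\le t}Y_s. \]
The gain is that the supremum of the solution has become the supremum of the Itô process $Y$.

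\emph{Picard scheme and membership in $\mathbb{D}^{1,2}$.} Define $X^{(0)}_t\equiv x/(1-\alpha)$ and, recursively,
\[ Y_t^{(n+1)}=x+\int_0^t b(X_s^{(n)})\dif s+\int_0^t\sigma(X_s^{(n)})\dif B_s,\qquad X_t^{(n+1)}=Y_t^{(n+1)}+\frac{\alpha}{1-\alpha}\sup_{0\le s\le t}Y_s^{(n+1)}. \]
Since $y\mapsto y+\frac{\alpha}{1-\alpha}\sup y$ is Lipschitz in the supremum norm, the contraction estimates of \cite{RDZ} give $X^{(n)}_t\to X_t$ in $L^2(\Omega)$, uniformly for $t$ in compact sets, together with moment bounds uniform in $n$. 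I would first show by induction that each $X_t^{(n)}\in\mathbb{D}^{1,2}$: under the assumed regularity (in particular $b,\sigma\in C^1$ with bounded derivatives) Proposition~\ref{estimate of composition} gives $b(X^{(n)}),\sigma(X^{(n)})\in\mathbb{D}^{1,2}$, the commutation of $D$ with the Lebesgue and Itô integrals gives $Y^{(n+1)}_t\in\mathbb{D}^{1,2}$, and the running supremum is treated by discretization: $S^{(n+1),m}_t=\max_{0\le i\le m}Y^{(n+1)}_{it/m}$ is a finite maximum of $\mathbb{D}^{1,2}$ variables, hence lies in $\mathbb{D}^{1,2}$ with $\|DS^{(n+1),m}_t\|_H\le\sup_{0\le u\le t}\|DY^{(n+1)}_u\|_H$, and letting $m\to\infty$ the closability of $D$ yields $\sup_{0\le s\le t}Y_s^{(n+1)}\in\mathbb{D}^{1,2}$.

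\emph{Uniform bound and passage to the limit.} For $r\le t$ the chain rule gives
\[ D_rY_t^{(n+1)}=\sigma(X_r^{(n)})+\int_r^t b'(X_s^{(n)})D_rX_s^{(n)}\dif s+\int_r^t\sigma'(X_s^{(n)})D_rX_s^{(n)}\dif B_s, \]
while the discretization above yields the pointwise bound $|D_rX_t^{(n+1)}|\le|D_rY_t^{(n+1)}|+\frac{|\alpha|}{1-\alpha}\sup_{r\le u\le t}|D_rY_u^{(n+1)}|$. Writing $\phi_n(t)=\sup_{u\le t}\E\|DX_u^{(n)}\|_H^2$, Doob's and the Burkholder--Davis--Gundy inequalities applied to $u\mapsto D_rY_u^{(n+1)}$, together with the linear growth of $\sigma$ and the uniform moment bounds, give
\[ \E\int_0^t\sup_{r\le u\le t}|D_rY_u^{(n+1)}|^2\dif r\le C(1+t)+C\int_0^t\phi_n(u)\dif u, \]
hence $\phi_{n+1}(t)\le C(1+t)+C\int_0^t\phi_n(u)\dif u$; a Gronwall induction then bounds $\sup_n\phi_n(T)<\infty$ for every finite $T$. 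Since $X_t^{(n)}\to X_t$ in $L^2(\Omega)$ and $\sup_n\E\|DX_t^{(n)}\|_H^2<\infty$, the closability of $D$ (a sequence bounded in $\mathbb{D}^{1,2}$ that converges in $L^2$ has its limit in $\mathbb{D}^{1,2}$) gives $X_t\in\mathbb{D}^{1,2}$.

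\emph{Main obstacle.} The difficulty is entirely in the running supremum. Its path-dependence prevents writing a closed linear equation for $D_rX_t$, so the usual variational-equation route is unavailable; instead $D_rX_t$ must be dominated by the maximal function $\sup_{r\le u\le t}|D_rY_u|$, which is then absorbed by Doob's inequality before Gronwall applies. Establishing that the supremum is genuinely Malliavin differentiable, rather than merely bounding a discretized derivative, is the second delicate point, and it is handled by the discretization-plus-closability scheme. If in addition one wants the explicit formula $D_r\sup_{0\le s\le t}Y_s=D_rY_\tau$, the almost sure uniqueness of the maximizer $\tau$ is needed; this follows from the nondegeneracy $|\sigma|>0$, but it is not required for $\mathbb{D}^{1,2}$-membership alone.
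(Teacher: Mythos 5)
Your argument is correct and follows essentially the same route as the paper (its Lemma~\ref{l:XtDm2} and the cited proof in \cite{YZ}): a Picard iteration in which the running maximum is resolved explicitly through the factor $\tfrac{1}{1-\alpha}$, membership of each iterate in $\mathbb{D}^{1,2}$ via the chain rule and discretization of the supremum, uniform bounds on $\E\|DX^{(n)}_t\|_H^2$ by Gronwall, and the closedness of $D$ to pass to the limit. The only cosmetic difference is that you perform the $\sup Y=(1-\alpha)M_t$ reduction on the limiting equation before setting up the scheme, whereas the paper solves for $\max_{0\le s\le t}X^{n+1}_s$ inside the iteration itself.
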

\begin{thm} (\cite[Theorem 3.2]{YZ})
Assume that $\sigma$ and $b$ are both Lipschitz continuous, and $|\sigma(x)|>0$
for all $x \in \R$. Then, for $t>0$, the law of $X_t$ is absolutely continuous with respect to
Lebesgue measure.
\end{thm}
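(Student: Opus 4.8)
The plan is to invoke the Malliavin-calculus criterion for absolute continuity (the Bouleau--Hirsch criterion; see \cite{N}): since the preceding theorem already guarantees $X_t \in \mathbb{D}^{1,2}$, it suffices to prove that
\Be
\|DX_t\|_H > 0 \qquad \PP\text{-almost surely.}
\Ee
All the difficulty is concentrated in the non-smooth perturbation term $\alpha M_t$, where $M_t := \sup_{0\le s\le t} X_s$; once its Malliavin derivative is understood, the remainder is a variation-of-constants computation.

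The key structural observation concerns the running maximum. Because $\sigma$ is non-degenerate, the maximum of $X$ on $[0,t]$ is almost surely attained at a single time $\tau_t \in [0,t]$; moreover the fluctuations of both signs just before $t$ give $\PP(X_t = M_t) = 0$, so $\tau_t < t$ almost surely. Differentiating \eqref{e:PerEqn} requires $D_r M_t$, and here causality enters: perturbing the driving noise at a time $r > \tau_t$ does not move the location of the maximum, so $M_t = X_{\tau_t}$ is locally unaffected, and since $\tau_t < r$ forces $D_r X_{\tau_t}=0$ by adaptedness,
\Be
D_r M_t = 0 \qquad \text{for } r \in (\tau_t, t).
\Ee
Making this rigorous is the main obstacle, as the supremum is not a differentiable functional. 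I would either discretise the supremum over a finite time grid---each finite maximum being a $\mathbb{D}^{1,2}$ functional with explicit derivative---and pass to the limit as the mesh shrinks (as in the computation of $D\sup_s B_s$ in \cite{N}), or appeal to the known characterisation of the Malliavin derivative of the maximum of a continuous $\mathbb{D}^{1,2}$-process with an a.s. unique maximiser, which gives $D_r M_t = D_r X_{\tau_t}$ directly.

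Granting this, the next step is the following: for $r \in (\tau_t, t)$ and every $s \in [r,t]$ one has $\tau_s = \tau_t < r$, so the supremum contributes nothing, and $u_r(s) := D_r X_s$ solves the linear equation obtained by differentiating \eqref{e:PerEqn},
\Be
u_r(s) = \sigma(X_r) + \int_r^s b'(X_v)\,u_r(v)\,\dif v + \int_r^s \sigma'(X_v)\,u_r(v)\,\dif B_v,
\Ee
where the a.e.\ derivatives $b',\sigma'$ of the Lipschitz coefficients enter through the Lipschitz version of the Malliavin chain rule (cf.\ Proposition \ref{estimate of composition} and \cite{N}). Its solution is the strictly positive stochastic exponential
\Be
D_r X_t = \sigma(X_r)\,\exp\!\left(\int_r^t \sigma'(X_v)\,\dif B_v + \int_r^t\Big(b'(X_v) - \tfrac12\sigma'(X_v)^2\Big)\dif v\right),
\Ee
so that, because $|\sigma(X_r)|>0$ by hypothesis, $D_r X_t \neq 0$ for every such $r$.

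It then remains to assemble the lower bound. Restricting the integration to $r\in(\tau_t,t)$ and using the explicit formula,
\Be
\|DX_t\|_H^2 = \int_0^t |D_r X_t|^2\,\dif r \;\ge\; \int_{\tau_t}^t |D_r X_t|^2\,\dif r \;>\;0 \qquad \PP\text{-a.s.,}
\Ee
the strict positivity following from $\tau_t < t$ a.s.\ together with the non-vanishing of $D_r X_t$ on $(\tau_t,t)$. The absolute continuity criterion then yields that the law of $X_t$ is absolutely continuous with respect to Lebesgue measure. I expect the genuine work to lie entirely in the smoothing-of-the-supremum step; the linear-SDE computation and the final positivity argument are routine by comparison.
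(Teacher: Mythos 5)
First, a point of comparison that matters here: this paper does not prove the statement at all --- it is quoted as \cite[Theorem 3.2]{YZ} --- so there is no in-paper argument to measure your proposal against. The assessment below is of your argument on its own terms, relative to the standard proof of such absolute-continuity results (which is essentially the route taken in \cite{YZ}).

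Your strategy --- the Bouleau--Hirsch criterion combined with the observation that the supremum term is Malliavin-invisible after the a.s.\ unique argmax $\tau_t$, so that for $r\in(\tau_t,t)$ the derivative $D_rX_t$ is a nonvanishing stochastic exponential started from $\sigma(X_r)\neq 0$ --- is the right one, and the skeleton is sound. But two of the steps you assert are genuine obligations rather than routine facts. (i) The a.s.\ uniqueness of the argmax and the property $\tau_t<t$ a.s.\ must be established for the \emph{perturbed} process: the singular term $\alpha\,\dif M_s$ acts precisely on the set $\{X_s=M_s\}$, so one cannot simply quote the corresponding statement for diffusions. A clean route is to set $Z=M-X\ge 0$, observe that $M$ is constant on the interval after the first time the global maximum is reached, so that there $Z$ solves an unperturbed nondegenerate SDE started from $0$ and constrained to stay nonnegative, and conclude by a Brownian-meander-type argument that $Z>0$ strictly after that time; this yields both uniqueness of the argmax and $\tau_t<t$ in one stroke. (ii) The identity $D_rM_t=D_rX_{\tau_t}$, hence $D_rM_t=0$ for $r>\tau_t$, requires the lemma on Malliavin derivatives of suprema of processes with a.s.\ unique maximizer, which in turn needs an $H$-continuous version of $s\mapsto DX_s$; note that the inequality \eqref{e:MalSupLes} recorded in this paper is only the weaker norm bound and does not localize the derivative after $\tau_t$, so it cannot substitute for this step. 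A minor further point: since $b$ and $\sigma$ are only Lipschitz, the first-variation equation holds with bounded adapted processes in place of $b'(X_v)$ and $\sigma'(X_v)$ (the Lipschitz chain rule of \cite{N}), which is harmless for the strict positivity of the exponential but should be stated in that form. With (i) and (ii) supplied, your argument closes.
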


In this paper, we shall prove the following results about the smoothness of densities:
\begin{thm} \label{t:MThm1}
Assume that $b$ is bounded smooth and that $\sigma(x)\equiv \sigma$. If $\alpha<1$, $t_0>0$ and $b$ satisfy
$$\theta(t_0,\alpha,b)<1/2,$$
with $\theta(t_0,\alpha,b):=\sqrt{2\|b'\|_{{\rm \infty}}^2 t_0^2+8 \alpha^2}+{\|b'\|_{{\rm \infty}}^2 t_0^2+4\alpha^2}$, then the law of $X_t$ in \eqref{e:PerEqn} admits a smooth density for all $t \in (0,t_0]$.
\end{thm}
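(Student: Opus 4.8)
The plan is to apply Proposition \ref{t:Cri} to $F=X_t$. By the first cited theorem of \cite{YZ} we already know $X_t\in\mathbb{D}^{1,2}$, so two things remain: upgrading this to $X_t\in\mathbb{D}^{\infty,2}$, and establishing the non-degeneracy $\|DX_t\|_H^{-1}\in\cap_{p\ge1}L^p(\Omega)$. Since $\theta(t,\alpha,b)$ is increasing in $t$, the hypothesis $\theta(t_0,\alpha,b)<1/2$ forces $\theta(t,\alpha,b)<1/2$ for every $t\in(0,t_0]$, so all such $t$ can be treated simultaneously. Throughout, $\sigma\neq0$ is used to guarantee that the noise is genuinely present.

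First I would write down the equation for the Malliavin derivative. Set $M_t=\sup_{0\le s\le t}X_s$. Applying Proposition \ref{estimate of composition} (valid because $\sigma$ is constant and $b\in C^\infty$ with bounded derivatives, so $b'$ acts as a bounded multiplier) together with $D_r B_t=\one_{\{r\le t\}}$, one obtains for $r\le t$
\Be
D_r X_t=\sigma+\int_r^t b'(X_s)\,D_r X_s\,\dif s+\alpha\,D_r M_t,
\Ee
and $D_r X_t=0$ for $r>t$. The term $D_r M_t$ is handled by the standard fact that, for a nondegenerate continuous Itô process, the running maximum is attained at an almost surely unique time $\tau_t$, and then $D_r M_t=D_r X_{\tau_t}$. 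A useful simplification comes from evaluating the equation at $\tau_t$: since $M_{\tau_t}=X_{\tau_t}$ one gets the closed identity $(1-\alpha)X_{\tau_t}=x+\int_0^{\tau_t}b(X_s)\,\dif s+\sigma B_{\tau_t}$, whose differentiation yields a clean expression for $D_r X_{\tau_t}$ with no contribution from $D_r\tau_t$. Equivalently, the substitution $Y_t:=X_t-\alpha M_t$ gives $Y_t=x+\int_0^t b(X_s)\,\dif s+\sigma B_t$, exhibiting the perturbation as driven by the running maximum of the comparatively regular Itô process $Y$.

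The core is the non-degeneracy bound. I would decompose $D_r X_t=\sigma\,\one_{\{r\le t\}}+W_t(r)$ and estimate the remainder $W_t$ in $H$. Starting from the integral equation above, applying Cauchy--Schwarz to the length-$t$ drift window, bounding $|D_r X_{\tau_t}|$ by $\sup_{s\le t}|D_r X_s|$, and closing the resulting self-referential (Gronwall-type) inequality, I expect a pathwise estimate $\|W_t\|_H\le\theta(t,\alpha,b)\,|\sigma|\sqrt t$; the $\sqrt{\,\cdot\,}$-plus-linear shape of $\theta$, with its $\|b'\|_\infty^2 t^2$ and $\alpha^2$ coefficients coming respectively from the drift integral and the supremum term, reflects the quadratic bound on $\|W_t\|_H$ whose admissibility is precisely what the constraint $\theta<1/2$ secures. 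Since the bound is deterministic and $\theta(t,\alpha,b)\le\theta(t_0,\alpha,b)<1/2$, the reverse triangle inequality gives
\Be
\|DX_t\|_H\ge\|\sigma\,\one_{\{r\le t\}}\|_H-\|W_t\|_H\ge\bigl(1-\theta(t,\alpha,b)\bigr)|\sigma|\sqrt t>\tfrac12|\sigma|\sqrt t>0
\Ee
for every $t\in(0,t_0]$. As the lower bound is a strictly positive constant, $\|DX_t\|_H^{-1}$ is bounded and hence lies in $\cap_{p\ge1}L^p(\Omega)$.

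It remains to show $X_t\in\mathbb{D}^{\infty,2}$, which I would prove by induction on the order of differentiation: differentiating the integral equation repeatedly yields linear equations for $D^kX_t$ driven by $b',\dots,b^{(k)}$ (all bounded) together with the iterated derivatives of $M_t$, and uniform $L^2$ bounds propagate through Gronwall since $b$, its derivatives and $\sigma$ are bounded and $\alpha<1$. I expect the main obstacle to lie exactly here, in the higher-order Malliavin differentiability of the supremum $M_t$: whereas $D_r M_t=D_r X_{\tau_t}$ disposes of the first derivative via the almost surely unique maximizer, justifying that all higher derivatives exist in $L^2$ and again reduce to derivatives of $X$ at $\tau_t$ (with the maximizer contributing nothing) is delicate, and is most safely carried out through a smooth approximation of the supremum followed by a limiting argument that controls the argmax contributions using the nondegeneracy of $\sigma$ and the regularity of $\tau_t$. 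The non-degeneracy estimate is comparatively robust; the Malliavin smoothness of the supremum term is the genuinely technical point.
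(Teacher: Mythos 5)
Your skeleton (apply Proposition \ref{t:Cri}, get a deterministic positive lower bound on $\|DX_t\|_H$, upgrade to $\mathbb{D}^{\infty,2}$) matches the paper, but the non-degeneracy argument takes a genuinely different route and its central step is asserted rather than proved. You decompose $D_rX_t=\sigma\one_{\{r\le t\}}+W_t(r)$ and claim the pathwise bound $\|W_t\|_H\le\theta(t,\alpha,b)\,|\sigma|\sqrt t$. This is exactly the point that needs work: to close your ``self-referential'' inequality you first need a pathwise \emph{upper} bound on $\sup_{0\le s\le t}\|DX_s\|_H$. The natural estimate, $\|W_t\|_H\le(\|b'\|_\infty t+|\alpha|)\sup_{0\le s\le t}\|DX_s\|_H$, closes (assuming you separately justify that the supremum is a.s.\ finite) to $\|W_t\|_H\le\frac{\beta}{1-\beta}|\sigma|\sqrt t$ with $\beta=\|b'\|_\infty t+|\alpha|$, which requires $\beta<1$ and yields the conclusion $\|DX_t\|_H>\frac12|\sigma|\sqrt t$ only when $\beta<\frac13$. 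Whether $\theta(t_0,\alpha,b)<1/2$ implies $\beta<\frac13$ is a separate and rather borderline computation that you have not done, so the specific constant $\theta$ in your claimed bound is unsupported; at best your argument proves the theorem under a smallness condition of the same flavour but not the stated one. The paper avoids any upper bound on $\sup_s\|DX_s\|_H$ altogether: Lemma \ref{l:LowUppBouDrX} gives an \emph{unconditional} lower bound on $\sup_{0\le s\le t}\|DX_s\|_H^2$ via $(a+b)^2\ge\frac12a^2-b^2$, Lemma \ref{l:DrXDifEst} shows the oscillation of $s\mapsto\|DX_s\|_H^2$ is at most $2\theta$ times that supremum, and the hypothesis $\theta<1/2$ is used only to transfer the lower bound on the supremum to the endpoint $t$ (Lemma \ref{t:SthDen}). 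If you keep your decomposition you must either derive your constant honestly or weaken the statement.

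The second gap is the one you flag yourself and leave open: $X_t\in\mathbb{D}^{m,2}$ for all $m$. Your plan (differentiate the equation repeatedly and control the iterated Malliavin derivatives of $M_t=\sup_{s\le t}X_s$ via a smooth approximation of the supremum and regularity of the maximizer) is not carried out, and it is the technically hardest half of the proof. The paper's Lemma \ref{l:XtDm2} circumvents it: Picard iteration in which the running maximum is solved out algebraically using $\alpha<1$, uniform bounds $\sup_n\E\|D^kX_t^n\|_{H^{\otimes k}}^2<\infty$, and the weak-compactness/closability argument of \cite[Proposition 1.2.3]{N}, which needs only bounds on the derivatives of the iterates and never an explicit identification of $D^kM_t$. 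Relatedly, for the first-order estimates the paper uses only the inequality \eqref{e:MalSupLes}, $\|D(\sup_{0\le s\le t}X_s)\|_H\le\sup_{0\le s\le t}\|DX_s\|_H$; the finer identity $D_rM_t=D_rX_{\tau_t}$ and the a.s.\ uniqueness of the maximizer, on which your argument leans, are not needed and only add unproved prerequisites.
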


\begin{thm} \label{t:MThm2}
Assume that $b$ is bounded smooth, and $\sigma$ is bounded smooth with $\|\sigma'\|_\infty<\infty$, $\|\sigma''\|_\infty<\infty$ and $\inf_{x\in \R}|\sigma(x)|>0$.
Let
\begin{equation}
F(y)=\int_x^{y} \frac{1}{\sigma(u)} \dif u, \ \ \ \ y \in (-\infty,\infty)
\end{equation}
and $\tl b(x)=\frac{b(F^{-1}(x))}{\sigma(F^{-1}(x))}-\frac 12 \sigma'(F^{-1}(x))$, then $\tl b$ is bounded smooth with $\|\tl b'\|_\infty<\infty$.
If $\alpha<1$, $t_0>0$ and $b$ satisfy
$$\theta(t_0,\alpha,\tl b)<1/2$$
with $\theta(t_0,\alpha,\tl b):=\sqrt{2\|\tl b'\|_{{\rm \infty}}^2 t_0^2+8 \alpha^2}+{\|\tl b'\|_{{\rm \infty}}^2 t_0^2+4\alpha^2}$,
 then the law of $X_t$ in \eqref{e:PerEqn} admits a smooth density for all $t \in (0,t_0]$.
\end{thm}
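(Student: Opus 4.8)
The plan is to reduce Theorem~\ref{t:MThm2} to the constant-diffusion case already settled in Theorem~\ref{t:MThm1} by means of the Lamperti-type transformation $Y_t:=F(X_t)$. Since $\inf_x|\sigma(x)|>0$ and $\sigma$ is bounded, the map $F$ has derivative $F'=1/\sigma$ bounded above and away from $0$, so $|F(y)|\ge \|\sigma\|_\infty^{-1}|y-x|\to\infty$ as $|y|\to\infty$; hence $F:\R\to\R$ is a strictly increasing $C^\infty$-diffeomorphism with inverse satisfying $(F^{-1})'=\sigma\circ F^{-1}$. In particular $Y_0=F(x)=0$, and because $F$ is increasing and continuous the running maxima obey $\sup_{0\le s\le t}Y_s=F\big(\sup_{0\le s\le t}X_s\big)$.

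First I would write $M_t:=\sup_{0\le s\le t}X_s$, a continuous nondecreasing process, and rewrite \eqref{e:PerEqn} as the semimartingale identity $\dif X_t=b(X_t)\,\dif t+\sigma(X_t)\,\dif B_t+\alpha\,\dif M_t$, with $\langle X\rangle_t=\int_0^t\sigma(X_s)^2\,\dif s$ (the finite-variation term $\alpha M$ contributing nothing to the quadratic variation). Applying It\^o's formula to $Y_t=F(X_t)$ and using $F'=1/\sigma$, $F''=-\sigma'/\sigma^2$ gives
\Be
\dif Y_t=\lt(\frac{b(X_t)}{\sigma(X_t)}-\tfrac12\sigma'(X_t)\rt)\dif t+\dif B_t+\frac{\alpha}{\sigma(X_t)}\,\dif M_t.
\Ee
The decisive point is that the Stieltjes measure $\dif M_t$ is carried by the contact set $\{t:X_t=M_t\}$, on which $\sigma(X_t)^{-1}=\sigma(M_t)^{-1}=F'(M_t)$; hence, by the chain rule for the increasing process $M$,
\Be
\alpha\int_0^t\frac{1}{\sigma(X_s)}\,\dif M_s=\alpha\int_0^t F'(M_s)\,\dif M_s=\alpha\big(F(M_t)-F(M_0)\big)=\alpha\sup_{0\le s\le t}Y_s .
\Ee
Substituting $X_s=F^{-1}(Y_s)$ in the drift, this shows that $Y$ solves
\Be
Y_t=\int_0^t\tl b(Y_s)\,\dif s+B_t+\alpha\sup_{0\le s\le t}Y_s,
\Ee
which is exactly of the form \eqref{e:PerEqn} with constant diffusion coefficient $\sigma\equiv 1$, drift $\tl b$, the same $\alpha$, and initial value $0$.

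It then remains to verify the hypotheses of Theorem~\ref{t:MThm1} for $Y$. A direct computation with $w=F^{-1}(x)$ and $(F^{-1})'(x)=\sigma(w)$ gives
\Be
\tl b'(x)=b'(w)-\frac{b(w)\sigma'(w)}{\sigma(w)}-\tfrac12\sigma''(w)\sigma(w),
\Ee
and each term is bounded thanks to $b,b',\sigma',\sigma''$ bounded together with $\inf_x|\sigma|>0$; this is precisely where $\|\sigma''\|_\infty<\infty$ enters, and it yields the boundedness, smoothness and $\|\tl b'\|_\infty<\infty$ asserted in the statement. Since by hypothesis $\theta(t_0,\alpha,\tl b)<1/2$, Theorem~\ref{t:MThm1} applies to $Y$ and furnishes a smooth density $p_{Y_t}\in C^\infty(\R)$ for every $t\in(0,t_0]$. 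Finally, because $F$ is a $C^\infty$-diffeomorphism of $\R$, the identity $X_t=F^{-1}(Y_t)$ transports smoothness of densities: the change-of-variables formula gives the density of $X_t$ as $p_{X_t}(z)=p_{Y_t}(F(z))/\sigma(z)$, a composition and product of $C^\infty$ functions, hence $p_{X_t}\in C^\infty(\R)$, which is the claim.

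The main obstacle is the second step, namely the rigorous treatment of the perturbation term under the transform: one must justify that $M$ is a continuous process of bounded variation whose Stieltjes measure is supported on the contact set, so that It\^o's formula applies with no extra quadratic-variation contribution from $\alpha\,\dif M_t$, and that $\int_0^t F'(M_s)\,\dif M_s$ genuinely identifies with $\sup_{0\le s\le t}Y_s$ via monotonicity of $F$. A secondary technical point is to confirm that the $Y$ produced by the transform coincides with \emph{the} unique solution of the transformed perturbed equation, so that Theorem~\ref{t:MThm1} may legitimately be invoked for it; this follows from pathwise uniqueness for \eqref{e:PerEqn} under $\alpha<1$ together with the bijectivity of $F$.
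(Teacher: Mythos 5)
Your proposal is correct, and the heart of it --- the Lamperti transform $Y_t=F(X_t)$, the It\^o computation, and the identification $\alpha\int_0^t\sigma(X_s)^{-1}\,\dif M_s=\alpha\sup_{0\le s\le t}F(X_s)$ using that $\dif M_s$ is carried by the contact set $\{X_s=M_s\}$ and that $F$ is increasing --- is exactly the reduction the paper performs in its ``multiplicative noise'' subsection. Where you genuinely diverge is in how you close the argument. The paper does \emph{not} apply Theorem~\ref{t:MThm1} to $Y$ and push the density forward; instead it only extracts the quantitative lower bound $\|DY_t\|_H^2\ge \frac{[1-2\theta(t_0,\alpha,\tl b)]t}{2(1+2\|\tl b'\|_\infty^2t^2+2\alpha^2)}$ from Lemma~\ref{t:SthDenY}, transfers it back to $X$ via the chain rule $\|DY_t\|_H=|\sigma(X_t)|^{-1}\|DX_t\|_H\le(\inf|\sigma|)^{-1}\|DX_t\|_H$, and then applies the Malliavin criterion (Proposition~\ref{t:Cri}) directly to $X_t$, using Lemma~\ref{l:XtDm2} to get $X_t\in\mathbb{D}^{m,2}$ for the multiplicative-noise equation. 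Your route --- conclude $p_{Y_t}\in C^\infty$ first and then write $p_{X_t}(z)=p_{Y_t}(F(z))/\sigma(z)$ through the $C^\infty$-diffeomorphism $F$ --- is equally valid and arguably cleaner, since it needs the $\mathbb{D}^{m,2}$-membership only for the constant-diffusion process $Y$ rather than for $X$ itself; its only cost is that you must check $Y=F(X)$ is genuinely the (unique) solution of the transformed equation so that Theorem~\ref{t:MThm1} applies to it, which you correctly flag. One shared caveat, not a defect of your argument relative to the paper's: invoking the $\mathbb{D}^{m,2}$ machinery for all $m$ implicitly requires all higher derivatives of the drift to be bounded, whereas your explicit computation (like the paper's hypotheses) only controls $\tl b'$ via $b',\sigma',\sigma''$; the paper glosses over this in the same way under the phrase ``bounded smooth.''
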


\begin{proof} [{\bf Proofs of Theorems \ref{t:MThm1} and \ref{t:MThm2}:}]

The main idea is to use Proposition \ref{t:Cri} to prove the two theorems.
To verify the conditions in Proposition \ref{t:Cri},
it suffices to prove that $X_t \in D^{m,2}$ for all $m \ge 1$ and $\|DX_t\|_H \ge c>0$ a.s. for some constant $c>0$.

Theorem \ref{t:MThm1} immediately follows from Lemmas \ref{l:XtDm2} and \ref{t:SthDen} below.

Now we prove Theorem \ref{t:MThm2}. Recall $Y_t=\int_x^{X_t} \frac{1}{\sigma(u)} du$ in Lemma \ref{t:SthDenY} below, by the condition of $\sigma$, $F$ is a continuous and strictly increasing function with bounded derivative and thus
\Be
\|D Y_t\|_H=\|D F(X_t)\|_H \le \frac{1}{\inf_{x\in \R}|\sigma(x)|} \|D X_t\|_H.
\Ee
Hence, by Lemmas \ref{l:XtDm2} and \ref{t:SthDenY} below, under the same condition as in Theorem \ref{t:MThm2} we have
\Be
\|D X_t\|_H \ge \inf_{x\in \R}|\sigma(x)|\cdot \|D Y_t\|_H \ge  \inf_{x\in \R}|\sigma(x)| \cdot \frac{[1-2 \theta(t_0,\alpha,\tl b)]  t}{2 (1+2 \|\tl b'\|_{{\rm \infty}}^2 t^2+2 \alpha^2)}  \ \ \ \ \ t \in [0,t_0].
\Ee
Hence, $X_t$ admits a smooth density for all $t \in (0,t_0]$.
\end{proof}

\section{Auxiliary lemmas}
It is well known that $\|DX_t\|_H$ has the following representation \cite{YZ} for all $t>0$:
$$\|DX_t\|_H=\left(\int_0^t |D_r X_t|^2 \dif r\right)^{\frac 12}$$
with $D_rX_t$ satisfying
\Be \label{e:DrXt}
D_r X_t=\sigma(X_r)+\int_r^t D_r b(X_s) \dif s+\int_r^t D_r \sigma(X_s)\dif B_s+\alpha D_r \left(\sup_{0 \le s \le t} X_s\right).
\Ee
We shall often use the following fact (\cite{YZ}, \cite{N})
\Be \label{e:DrXt=0}
D_r X_t=0 \ \ \ \ {\rm if}\ \ r>t,
\Ee
\Be \label{e:MalSupLes}
\left\|D(\sup_{0 \le s \le t} X_s)\right\|_H \le \sup_{0 \le s \le t} \left\|D X_s\right\|_H,
\Ee
where
\begin{equation*}
\left\|D(\sup_{0 \le s \le t} X_s)\right\|_H^2=\int_0^t \left|D_r \left(\sup_{0 \le s \le t} X_s\right)\right|^2 \dif r,
\ \ \ \ \left\|D X_t\right\|_H^2=\int_0^t |D_r X_t|^2 \dif r.
\end{equation*}
\ \ \  \\

\subsection{$X_t$  is an element in $\mathbb{D}^{m,2}$ for all $t>0$ and $m \ge 1$}

\begin{lemma} \label{l:XtDm2}
Let $X_t$ be the solution of the perturbed stochastic differential equation \eqref{e:PerEqn}, and suppose that the coefficients $b$ and $\sigma$ are smooth with bounded derivatives of all orders. Then $X_t$ belongs to $\mathbb{D}^{m,2}$ for all $t>0$ and all $m \ge 1$.
\end{lemma}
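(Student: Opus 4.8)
The plan is to prove, by induction on $m$, the slightly stronger assertion that $X_t$ belongs to the $L^p$ analogues $\mathbb{D}^{m,p}$ for every $p\ge 2$, with norms bounded uniformly for $t$ in a compact interval $[0,T]$; the extra integrability is what makes the induction self-sustaining through the products produced by the chain rule. The base case $m=1$ is the cited result $X_t\in\mathbb{D}^{1,2}$, whose $L^p$ version is read off from \eqref{e:DrXt}. For the inductive step I would realize the derivatives through the Picard scheme $X^{(0)}_t\equiv x$ and
\[ X^{(n+1)}_t = x + \int_0^t b(X^{(n)}_s)\,\dif s + \int_0^t \sigma(X^{(n)}_s)\,\dif B_s + \alpha \sup_{0\le s\le t} X^{(n)}_s , \]
which converges to $X_t$ in $L^2(\Omega)$. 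Since $\mathbb{D}^{m,2}$ is a Hilbert space on which the derivative operators are closed, a sequence that is bounded in $\mathbb{D}^{m,2}$ and convergent in $L^2(\Omega)$ has its limit in $\mathbb{D}^{m,2}$ (the top derivative being the weak $L^2$-limit of a subsequence). Thus it suffices to establish $\sup_n\sup_{t\le T}\E\big[\|D^m X^{(n)}_t\|_{H^{\otimes m}}^2\big]<\infty$.

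To obtain these uniform bounds I would first record the moment estimates $\sup_{t\le T}\E[|X_t|^p]<\infty$ for all $p$, which follow from the linear growth of $b,\sigma$ and the constraint $\alpha<1$ together with the Burkholder--Davis--Gundy inequality applied to the running supremum; these guarantee integrability of every composite coefficient below. Applying $D^m$ to the defining equation and expanding the compositions $b(X_s)$, $\sigma(X_s)$ by the Fa\`{a} di Bruno formula yields a linear equation for the top-order derivative $D^m X_t$ whose inhomogeneity is a finite sum of products $b^{(j)}(X_s)\,D^{k_1}X_s\cdots D^{k_j}X_s$ (and similarly for $\sigma$) with $k_1+\cdots+k_j=m$ and each $k_i<m$. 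Because $b,\sigma$ have bounded derivatives of all orders, the factors $b^{(j)}(X_s)$, $\sigma^{(j)}(X_s)$ are bounded, and by the inductive hypothesis the products of lower-order derivatives lie in every $L^p$ by H\"older's inequality. I would then close the estimate by applying Burkholder--Davis--Gundy to the stochastic-integral contribution and running a Gronwall argument on $\phi(t):=\sup_{t'\le t}\E[\|D^m X^{(n)}_{t'}\|_{H^{\otimes m}}^2]$, the drift and diffusion terms being linear in $D^m X^{(n)}$ with bounded coefficients.

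The genuine obstacle is the perturbation term $\alpha\, D^m\big(\sup_{0\le s\le t}X^{(n)}_s\big)$. At order $m=1$ it is tamed by the clean estimate \eqref{e:MalSupLes}, $\|D(\sup_s X_s)\|_H\le \sup_s\|DX_s\|_H$, which rests on the a.s.\ uniqueness of the time $\tau$ realizing the supremum, so that $D(\sup_s X_s)=DX_\tau$. The difficulty at order $m\ge 2$ is that differentiating $DX_\tau$ again feels the dependence of $\tau$ on $\omega$, so the naive inequality $\|D^m(\sup_s X_s)\|_{H^{\otimes m}}\le \sup_s\|D^m X_s\|_{H^{\otimes m}}$ is precisely the crux of the whole argument and the point where the regularity of the running maximum enters. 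I expect to have to justify it either by a nondegeneracy estimate on the maximizer --- controlling $D\tau$ through the first-order relation $\partial_s X_s\big|_{s=\tau}=0$ and the strict concavity of $s\mapsto X_s$ at its peak, via an implicit-function argument --- or by approximating the supremum by smooth functionals and controlling the resulting derivatives uniformly. This is where the bulk of the technical work lies; once a supremum bound of this type is in hand, the Gronwall and Burkholder--Davis--Gundy machinery of the previous paragraph delivers the uniform $\mathbb{D}^{m,2}$ estimate and hence $X_t\in\mathbb{D}^{m,2}$ for all $m\ge 1$ and $t>0$.
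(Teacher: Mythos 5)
Your overall architecture is the same as the paper's: Picard iteration, uniform bounds on $\E\big[\|D^k X^n_t\|_{H^{\otimes k}}^2\big]$ for $k\le m$, and then the closability/weak-compactness argument of \cite[Proposition 1.2.3]{N} to transfer membership in $\mathbb{D}^{m,2}$ from the iterates to their $L^2$-limit $X_t$. There are two points of divergence. A minor one: the paper's iteration \eqref{e:Pic1} keeps $\alpha\max_{0\le s\le t}X^{n+1}_s$ on the right-hand side and then \emph{solves} this implicit equation for the running maximum, so that $X^{n+1}_t$ becomes an explicit functional of $X^n$ alone, with the supremum taken over the It\^o process $\int_0^{\cdot}\sigma(X^n_u)\,\dif B_u+\int_0^{\cdot}b(X^n_u)\,\dif u$ and prefactor $\alpha/(1-\alpha)$. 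Your scheme feeds $\alpha\sup_{s}X^{(n)}_s$ directly into the next iterate; since the supremum map has Lipschitz constant $1$ in the uniform norm uniformly in $t$, that recursion is a contraction only for $|\alpha|<1$ and does not obviously converge on the full range $\alpha<1$ covered by the theorem. This is repairable, but worth noting.

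The serious issue is the one you yourself flag and then leave open: the $m$-th Malliavin derivative of a running supremum for $m\ge 2$. Your proposal stops at ``I expect to have to justify it either by \dots or by \dots,'' so the key step is not proved; and neither of your suggested repairs can succeed, because the target inequality $\|D^m(\sup_{s}Y_s)\|_{H^{\otimes m}}\le\sup_{s}\|D^m Y_s\|_{H^{\otimes m}}$ is false already for $m=2$. Indeed $D_r\big(\sup_{s\le t}B_s\big)=\one_{[0,\tau]}(r)$ with $\tau$ the argmax, and an indicator $\one_A$ can lie in $\mathbb{D}^{1,2}$ only if $\PP(A)\in\{0,1\}$ (apply the chain rule to $\one_A=\one_A^2$ to get $(1-2\one_A)D\one_A=0$); since $0<\PP(\tau\ge r)<1$ for $0<r<t$, the Brownian running maximum is not in $\mathbb{D}^{2,2}$ even though $D^2B_s=0$. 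So second-order differentiability of the supremum term requires an input genuinely beyond \eqref{e:MalSupLes}, not a refinement of it. To be fair, the paper is equally thin at exactly this joint: the uniform bound \eqref{e:Pic2} is asserted ``by a similar argument as in \cite[Theorem 3.1]{YZ},'' but that theorem is a first-order statement and supplies no control of $D^k$ of a running maximum for $k\ge2$. Your instinct about where the difficulty sits is therefore sound, but as written your argument does not establish the lemma, and the counterexample above shows that no pathwise bound on higher derivatives of the maximum can close it.
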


\begin{proof}
We shall use Picard iteration to prove the lemma. Letting $X^0_t=x_0$ for all $t>0$,
 define $X_t^{n+1}$ be the unique, adapted solution to the following equation:
 \
\begin{equation} \label{e:Pic1}
X_t^{n+1}=x_0+\int_{0}^{t}\sigma(X_s^n)\dif B_s+\int_{0}^{t}b(X_s^n) \dif s+\alpha \max_{0\leq s\leq t} \left(X_s^{n+1}\right),
\end{equation}
which obviously implies
\
\begin{equation*}
\max_{0 \le s \le t}\left(X_s^{n+1}\right)=x_0+\max_{0 \le s \le t}\left(\int_{0}^{t}\sigma(X_s^n)\dif B_s+\int_{0}^{t}b(X_s^n) \dif s\right)+\alpha \max_{0\leq s\leq t} \left(X_s^{n+1}\right).
\end{equation*}
Therefore,
\
\begin{equation*}
\max_{0 \le s \le t}\left(X_s^{n+1}\right)=\frac{x_0}{1-\alpha}+\frac1{1-\alpha}\max_{0 \le s \le t}\left(\int_{0}^{t}\sigma(X_s^n)\dif B_s+\int_{0}^{t}b(X_s^n)\dif s\right),
\end{equation*}
this and \eqref{e:Pic1} further gives
\begin{equation*}
\begin{split}
X_t^{n+1}=&\frac{x_0}{1-\alpha}+\int_{0}^{t}\sigma(X_s^n)\dif B_s+\int_{0}^{t}b(X_s^n)\dif s\\
&+\frac{\alpha}{1-\alpha}\max_{0\leq s\leq t}\left(\int_{0}^{s}\sigma(X_u^n)\dif B_u+\int_{0}^{s}b(X_u^n)\dif u\right).
\end{split}
\end{equation*}
By the above representation of $X^{n+1}_t$ and a standard method \cite{RDZ}, for every $t>0$ we have
\
\Be  \label{e:XnConXt}
\lim_{n \rightarrow \infty} X^n_t=X_t \ \ \ \ {\rm in} \ L^2(\Omega).
\Ee
Let $m \ge 1$, it is standard to check that $X^n_t \in \mathbb{D}^{m,2}$ for every $t>0$ and $n \ge 1$ \cite[Theorem 3.1]{YZ}.
By a similar argument as in \cite[Theorem 3.1]{YZ},
we have
\
\Be  \label{e:Pic2}
\sup_{n \ge 1}\E\left[\|D^k X^n_t\|_{H^{\otimes k}}^2\right] <\infty, \ \ \ \ \ k=1,...,m.
\Ee

 Next we prove $X_t \in \mathbb{D}^{m,2}$ by the argument of \cite[Proposition 1.2.3]{N}. Indeed, by \eqref{e:Pic2}, there exists some subsequence $D^k X^{n_j}_t$ weakly converges to some
$\alpha_k$ in $L^2(\Omega,H^{\otimes k})$ for $k=1,...,m$. By \eqref{e:XnConXt} and the remark immediately below \cite[Proposition 1.2.2]{N}, the projections of $D^k X^{n_j}_t$ on any Wiener chaos converge in the weak topology of $L^2(\Omega)$, as $n_j$ tends to infinity, to those of
$\alpha_k$ for $k=1,...,m$. Hence, $X_t \in \mathbb{D}^{m,2}$ and $D^k X_t=\alpha_k$ for $k=1,...,m$.  Moreover, for any weakly convergent subsequence the limit must be equal to $\alpha_1,..., \alpha_m$ by the same argument as above, and this implies the weak convergence of the whole sequence.
\end{proof}
\ \

\subsection{Additive noise case}
If $\sigma(x) \equiv \sigma$, then Eq. \eqref{e:DrXt} reads as
\Be \label{e:MalDer}
D_r X_t=\sigma+\int_r^t D_r b(X_s) \dif s+ \alpha D_r \left(\sup_{0 \le s \le t} X_s\right).
\Ee

\begin{lem} \label{l:DrXDifEst}
Let $t>0$ be arbitrary and $b$ be bounded smooth with $\|b'\|_\infty<\infty$. For all $0<t_1<t_2 \le t$, we have
\Bes
\begin{split}
\left|\|D X_{t_2}\|_H^2-\|D X_{t_1}\|_H^2\right| \le 2\left[\sqrt{2\|b'\|_{{\rm  \infty}}^2(t_2-t_1)^2+8\alpha^2}+{\|b'\|_{{\rm \infty}}^2(t_2-t_1)^2+4\alpha^2}\right]\sup_{0 \le s \le t} \left\|D X_s\right\|_H^2.
\end{split}
\Ees
\end{lem}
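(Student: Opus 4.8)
The plan is to localize the difference of the two squared Malliavin norms in the integration variable $r$ and to treat the ``old'' and ``new'' parts of the derivative separately. Set $\Delta:=t_2-t_1$, $K:=\|b'\|_\infty$ and $S:=\sup_{0\le s\le t}\|DX_s\|_H^2$. Since $D_rX_{t_1}=0$ for $r>t_1$ by \eqref{e:DrXt=0}, I would write
\[
\|DX_{t_2}\|_H^2-\|DX_{t_1}\|_H^2=\underbrace{\int_0^{t_1}\big(|D_rX_{t_2}|^2-|D_rX_{t_1}|^2\big)\,\dif r}_{=:A}+\underbrace{\int_{t_1}^{t_2}|D_rX_{t_2}|^2\,\dif r}_{=:B}.
\]
Here $A$ measures how the derivative mass accumulated on $[0,t_1]$ changes when the horizon moves from $t_1$ to $t_2$, while $B$ is the derivative mass freshly created on $[t_1,t_2]$. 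My expectation is that the square-root term of the claimed estimate comes from $A$ and the polynomial term from $B$.

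For $A$, I would factor $|D_rX_{t_2}|^2-|D_rX_{t_1}|^2=(D_rX_{t_2}-D_rX_{t_1})(D_rX_{t_2}+D_rX_{t_1})$ and subtract the two copies of \eqref{e:MalDer}, obtaining for $r\le t_1$
\[
D_rX_{t_2}-D_rX_{t_1}=\int_{t_1}^{t_2}b'(X_s)D_rX_s\,\dif s+\alpha\Big[D_r\big(\sup_{0\le s\le t_2}X_s\big)-D_r\big(\sup_{0\le s\le t_1}X_s\big)\Big].
\]
A Cauchy--Schwarz inequality in $r$ over $[0,t_1]$ bounds $|A|$ by the $L^2(\dif r)$-norm of this difference times that of $D_rX_{t_2}+D_rX_{t_1}$, and the second factor is at most $2\sqrt S$. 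In the first factor I would split the integrand by $|a+b|^2\le 2|a|^2+2|b|^2$: the drift part $\int_{t_1}^{t_2}b'(X_s)D_rX_s\,\dif s$ is estimated by a further Cauchy--Schwarz in $s$ (using $|b'|\le K$), Fubini and $\int_0^{t_1}|D_rX_s|^2\,\dif r\le\|DX_s\|_H^2\le S$, contributing $2K^2\Delta^2S$, while each running-maximum derivative is bounded by $S$ through \eqref{e:MalSupLes}, contributing $8\alpha^2S$. This yields $|A|\le 2\sqrt{2K^2\Delta^2+8\alpha^2}\,S$, the square-root part of the bound.

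For $B$ there is no difference structure to exploit, so I would estimate $|D_rX_{t_2}|^2$ for $r\in[t_1,t_2]$ directly from \eqref{e:MalDer}: the drift integral is handled by Cauchy--Schwarz and Fubini exactly as above, producing a contribution of order $K^2\Delta^2S$, and the running-maximum term by \eqref{e:MalSupLes}, producing the contribution of order $\alpha^2S$. Summing the bounds for $A$ and $B$ then gives the asserted inequality.

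I expect $B$ to be the main obstacle. Unlike the integrand of $A$, the kernel $D_rX_{t_2}$ on the near-diagonal strip $r\in[t_1,t_2]$ still carries the full additive constant $\sigma$ from \eqref{e:MalDer} (indeed $D_rX_r=\sigma+\alpha D_r(\sup_{0\le s\le r}X_s)$), so $B$ is not manifestly of higher order in $\Delta$ and $\alpha$. The delicate point is therefore to absorb this near-diagonal behaviour into $\sup_{0\le s\le t}\|DX_s\|_H^2$ with constants matching the target $2(K^2\Delta^2+4\alpha^2)S$, and to dispose of the difference of running maxima purely through the Malliavin estimate \eqref{e:MalSupLes} rather than through any pathwise monotonicity of $s\mapsto\sup_{0\le u\le s}X_u$.
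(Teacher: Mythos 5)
Your decomposition into $A$ and $B$ is exactly the paper's own ($B$ is its $I_1$ and $|A|\le I_2$), and your treatment of $A$ is sound and reproduces the paper's estimate: for $r\le t_1$ both copies of \eqref{e:MalDer} are in force, the constant $\sigma$ cancels in the subtraction, and the combination of difference-of-squares, Cauchy--Schwarz in $r$, the drift estimate $2\|b'\|_\infty^2(t_2-t_1)^2\sup_{0\le s\le t}\|DX_s\|_H^2$ and \eqref{e:MalSupLes} yields $|A|\le 2\sqrt{2\|b'\|_\infty^2(t_2-t_1)^2+8\alpha^2}\,\sup_{0\le s\le t}\|DX_s\|_H^2$. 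The genuine gap is the one you flag in your last paragraph and do not close: $B$ cannot be bounded by $2\bigl[\|b'\|_\infty^2(t_2-t_1)^2+4\alpha^2\bigr]\sup_{0\le s\le t}\|DX_s\|_H^2$. For $r\in(t_1,t_2]$ the kernel $D_rX_{t_2}=\sigma+\int_r^{t_2}D_rb(X_s)\,\dif s+\alpha D_r(\sup_{0\le s\le t_2}X_s)$ retains the additive constant $\sigma$, so $B$ carries an irreducible contribution of order $\sigma^2(t_2-t_1)$. Concretely, take $b\equiv0$ and $\alpha=0$: then $X_t=x+\sigma B_t$, the left-hand side of the lemma equals $\sigma^2(t_2-t_1)>0$, while the right-hand side is $0$. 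So the inequality as stated is false, and no refinement of your estimate of $B$ can rescue it.

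You should know that the paper's proof stumbles on precisely this point rather than resolving it. It writes $I_1=\int_{t_1}^{t_2}(D_rX_{t_2}-D_rX_{t_1})^2\,\dif r$ (legitimate, since $D_rX_{t_1}=0$ for $r>t_1$ by \eqref{e:DrXt=0}) and then invokes its claim \eqref{e:DrXDifEst} over the whole interval $[0,t_2]$; but \eqref{e:DrXDifEst} is derived by subtracting two instances of \eqref{e:MalDer}, which cancels $\sigma$ only where both representations hold, namely for $r\le t_1$. On $(t_1,t_2]$ one has $D_rX_{t_2}-D_rX_{t_1}=D_rX_{t_2}=\sigma+\cdots$, and the $\sigma^2(t_2-t_1)$ term is silently dropped. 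The damage is repairable downstream: in Lemma \ref{t:SthDen} only the one-sided bound $\|DX_{t_1}\|_H^2-\|DX_{t_2}\|_H^2\le -A-B\le|A|$ is needed, since $B\ge0$ there has the favourable sign. The correct conclusion is therefore the weaker, one-sided statement $\|DX_{t_1}\|_H^2\le\|DX_{t_2}\|_H^2+2\sqrt{2\|b'\|_\infty^2(t_2-t_1)^2+8\alpha^2}\,\sup_{0\le s\le t}\|DX_s\|_H^2$, which your argument for $A$ already proves and which suffices for the rest of the paper; the two-sided inequality you were asked to prove should not be pursued.
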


\begin{proof}
It is easy to see that
\Bes
\left|\|D X_{t_2}\|_H^2-\|D X_{t_1}\|_H^2\right|=\left|\int_0^{t_2} (D_r X_{t_2})^2 \dif r-\int_0^{t_1} (D_r X_{t_1})^2 \dif r\right| \le I_1+I_2,
\Ees
where
$$I_1:=\int_{t_1}^{t_2} (D_r X_{t_2})^2 \dif r, \ \ \ \ \ I_2:=\int_{0}^{t_1} \left|(D_r X_{t_2})^2-(D_r X_{t_1})^2\right| \dif r.$$
We claim that
\Be \label{e:DrXDifEst}
\begin{split}
\int_0^{t_2} (D_r X_{t_2}-D_r X_{t_1})^2 \dif r & \le 2 \left[\|b'\|_{{\rm \infty}}^2(t_2-t_1)^2+4\alpha^2\right] \sup_{0 \le s \le t} \left\|D X_s\right\|_H^2.
\end{split}
\Ee
and we will prove it in the last part of this proof.

Let us now estimate $I_1$ and $I_2$ by \eqref{e:DrXDifEst}. Observe
$$I_1=\int_{t_1}^{t_2} (D_r X_{t_2}-D_r X_{t_1})^2 \dif r \le \int_{0}^{t_2} (D_r X_{t_2}-D_r X_{t_1})^2 \dif r,$$
by \eqref{e:DrXDifEst} we have
\Be
\begin{split}
I_1 \le 2 \left[\|b'\|_{{\rm \infty}}^2(t_2-t_1)^2+4\alpha^2\right] \sup_{0 \le s \le t} \left\|D X_s\right\|_H^2.
\end{split}
\Ee
Further observe
\Bes
\begin{split}
I_2 & \le \left[\int_{0}^{t_1} (D_r X_{t_2}-D_r X_{t_1})^2 \dif r\right]^{\frac 12}\left[\int_{0}^{t_1} |D_r X_{t_2}+D_r X_{t_1}|^2 \dif r\right]^{\frac 12} \\
& \le \sqrt 2 \left[\int_{0}^{t_1} (D_r X_{t_2}-D_r X_{t_1})^2 \dif r\right]^{\frac 12}\left[\int_{0}^{t_1} |D_r X_{t_2}|^2+|D_r X_{t_1}|^2 \dif r\right]^{\frac 12} \\
& \le \sqrt 2 \left[\int_{0}^{t_1} (D_r X_{t_2}-D_r X_{t_1})^2 \dif r\right]^{\frac 12}\left[\int_{0}^{t_2} |D_r X_{t_2}|^2 \dif r+\int_0^{t_1} |D_r X_{t_1}|^2 \dif r\right]^{\frac 12}\\
& \le 2 \left[\int_{0}^{t_1} (D_r X_{t_2}-D_r X_{t_1})^2 \dif r\right]^{\frac 12} \sup_{0 \le s \le t} \left\|D X_s\right\|_H \\
& \le 2 \left[\int_{0}^{t_2} (D_r X_{t_2}-D_r X_{t_1})^2 \dif r\right]^{\frac 12} \sup_{0 \le s \le t} \left\|D X_s\right\|_H,
\end{split}
\Ees
this inequality and \eqref{e:DrXDifEst} gives
\Bes
\begin{split}
I_2 \le 2\sqrt{2[\|b'\|_{{\rm \infty}}^2(t_2-t_1)^2+4\alpha^2]} \sup_{0 \le s \le t} \left\|D X_s\right\|_H^2.
\end{split}
\Ees
Combining the estimates of $I_1$ and $I_2$, we immediately get the desired inequality in the lemma.

It remains to prove \eqref{e:DrXDifEst}. By \eqref{e:MalDer}, we have
\Bes
\begin{split}
(D_r X_{t_2}-D_r X_{t_1})^2 & \le 2 \left|\int_{t_1}^{t_2} D_r b(X_s) \dif s\right|^2+2 \alpha^2 \left|D_r \left(\sup_{0 \le s \le t_1} X_s\right)-D_r \left(\sup_{0 \le s \le t_2} X_s\right)\right|^2 \\
& \le 2 \left|\int_{t_1}^{t_2} D_r b(X_s) \dif s\right|^2+4\alpha^2 \left|D_r \left(\sup_{0 \le s \le t_1} X_s\right)\right|^2+4 \alpha^2 \left|D_r \left(\sup_{0 \le s \le t_2} X_s\right)\right|^2.
\end{split}
\Ees
By H\"{o}lder inequality, \eqref{e:DrXt=0} and Proposition \ref{estimate of composition}, we have
\Bes
\begin{split}
\int_0^{t_2} \left|\int_{t_1}^{t_2} D_r b(X_s) \dif s\right|^2 \dif r  & \le  \|b'\|_{{\rm \infty}}^2 \int_0^{t_2} (t_2-t_1) \int_{t_1}^{t_2} |D_r X_s|^2 \dif s \dif r \\
& = \|b'\|_{{\rm \infty}}^2 (t_2-t_1) \int_{t_1}^{t_2} \int_0^s |D_r X_s|^2 \dif r \dif s \\
& \le  \|b'\|_{{\rm \infty}}^2(t_2-t_1)^2 \sup_{0 \le s \le t} \left\|D X_s\right\|_H^2.
\end{split}
\Ees
Moreover, by \eqref{e:MalSupLes} and \eqref{e:DrXt=0} we have
\Bes
\int_0^{t_2} \left|D_r \left(\sup_{0 \le s \le t_2} X_s\right)\right|^2 \dif r \le \sup_{0 \le s \le t_2} \|D X_s\|_H^2 \le \sup_{0 \le s \le t} \|D X_s\|_H^2,
\Ees

\Bes
\int_0^{t_2} \left|D_r \left(\sup_{0 \le s \le t_1} X_s\right)\right|^2 \dif r = \int_0^{t_1} \left|D_r \left(\sup_{0 \le s \le t_1} X_s\right)\right|^2 \dif r \le \sup_{0 \le s \le t} \|D X_s\|_H^2.
\Ees
Collecting the above four inequalities, we immediately get the desired \eqref{e:DrXDifEst}.
\end{proof}

\begin{lem}  \label{l:LowUppBouDrX}
Let $b$ be bounded smooth with $\|b'\|_\infty<\infty$, we have
\Be  \label{e:LowUppBouDrX}
\sup_{0 \le s \le t} \|DX_s\|_H^2 \ge \frac{\sigma^2 t}{2 (1+2\|b'\|_{{\rm \infty}}^2 t^2+2 \alpha^2)}, \ \ \ \ \ t>0.
\Ee
\end{lem}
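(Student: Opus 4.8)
The plan is to exploit the fixed-point equation \eqref{e:MalDer} by isolating the constant $\sigma$ and integrating its square over $r\in[0,t]$. Concretely, applying the chain rule (Proposition \ref{estimate of composition}) gives $D_r b(X_s)=b'(X_s)\,D_r X_s$, so \eqref{e:MalDer} can be rewritten as
\[
\sigma = D_r X_t-\int_r^t b'(X_s)\,D_r X_s\,\dif s-\alpha\,D_r\Big(\sup_{0\le s\le t}X_s\Big).
\]
Squaring and integrating in $r$ over $[0,t]$ turns the left-hand side into $\sigma^2 t$, and the whole task reduces to bounding the three resulting pieces from above by a constant multiple of $\sup_{0\le s\le t}\|DX_s\|_H^2$.

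First I would group the last two terms together and use the elementary inequality $(a-b-c)^2\le 2a^2+4b^2+4c^2$ to obtain, for each fixed $r$,
\[
\sigma^2 \le 2 (D_r X_t)^2+4\Big(\int_r^t b'(X_s)\,D_r X_s\,\dif s\Big)^2+4\alpha^2\Big|D_r\Big(\sup_{0\le s\le t}X_s\Big)\Big|^2.
\]
The particular grouping (rather than the cruder $(a-b-c)^2\le 3(a^2+b^2+c^2)$) is what makes the final constants match the claimed factor $2(1+2\|b'\|_\infty^2 t^2+2\alpha^2)$, so choosing it correctly is the one genuinely delicate bookkeeping point.

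Next I would estimate the $\dif r$-integral of each of the three pieces. For the first, $\int_0^t (D_r X_t)^2\,\dif r=\|DX_t\|_H^2\le \sup_{0\le s\le t}\|DX_s\|_H^2$. For the drift piece, Cauchy--Schwarz gives $\big(\int_r^t b'(X_s)\,D_r X_s\,\dif s\big)^2\le \|b'\|_\infty^2 (t-r)\int_r^t (D_r X_s)^2\,\dif s$, and after the Fubini exchange $\int_0^t\int_r^t(\cdot)\,\dif s\,\dif r=\int_0^t\int_0^s(\cdot)\,\dif r\,\dif s$ the inner integral becomes $\|DX_s\|_H^2$, yielding the bound $\|b'\|_\infty^2 t^2\sup_{0\le s\le t}\|DX_s\|_H^2$. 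For the supremum piece, \eqref{e:MalSupLes} gives directly $\int_0^t\big|D_r(\sup_{0\le s\le t}X_s)\big|^2\,\dif r\le \sup_{0\le s\le t}\|DX_s\|_H^2$.

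Assembling the three bounds yields $\sigma^2 t\le 2(1+2\|b'\|_\infty^2 t^2+2\alpha^2)\sup_{0\le s\le t}\|DX_s\|_H^2$, which is exactly \eqref{e:LowUppBouDrX} after dividing through. The argument is otherwise routine; the only steps requiring care are the coefficient-preserving grouping in the elementary inequality and the Fubini swap together with the correct application of \eqref{e:MalSupLes} to the non-adapted supremum term.
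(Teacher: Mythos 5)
Your proof is correct and is essentially the paper's argument rearranged: the paper applies $(a+b)^2\ge \tfrac12 a^2-b^2$ to bound $(D_rX_t)^2$ from below by $\tfrac12\sigma^2-2b^2-2c^2$ and solves for the supremum at the end, which is algebraically identical to your inequality $\sigma^2\le 2(D_rX_t)^2+4b^2+4c^2$. The drift and supremum terms are then estimated exactly as you do, via Cauchy--Schwarz with the Fubini swap and via \eqref{e:MalSupLes}.
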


\begin{proof}
By \eqref{e:MalDer} and using $(a+b)^2\geq \frac{1}{2}a^2-b^2$, we have
\Bes
\begin{split}
(D_r X_t)^2 & \ge \frac 12 \sigma^2-\left[\int_r^t D_r b(X_s) \dif s+\alpha D_r \left(\sup_{0 \le s \le t} X_s\right)\right]^2 \\
& \ge \frac 12 \sigma^2-2 \left(\int_r^t D_r b(X_s) \dif s\right)^2-2\alpha^2 \left[D_r \left(\sup_{0 \le s \le t} X_s\right)\right]^2.
\end{split}
\Ees
Further observe
\begin{equation}   \label{e:MalDerDb}
\begin{split}
\int_0^t\left(\int_r^t D_r b(X_s) \dif s\right)^2 \dif r
& \le  \int_0^t (t-r) \int_r^t |D_r b(X_s)|^2 \dif s  \dif r \\
& \le   \int_0^t (t-r) \|b'\|_{{\rm \infty}}^2 \int_r^t |D_r X_s|^2 \dif s \dif r \\
& \le   t \|b'\|_{{\rm \infty}}^2 \int_0^t \int_r^t |D_r X_s|^2 \dif s \dif r \\
&=  t \|b'\|_{{\rm \infty}}^2\int_0^t \|DX_s\|_H^2  \dif s \\
&\le t^2 \|b'\|_{{\rm \infty}}^2  \sup_{0 \le s \le t}\|DX_s\|_H^2,
\end{split}
\end{equation}
where the second inequality is by Proposition \ref{estimate of composition}.
Hence,
\Bes
\begin{split}
\|D X_t\|_H^2
& \ge \frac{\sigma^2 t}{2}-2 \|b'\|_{{\rm \infty}}^2 t^2 \sup_{0 \le s \le t} \|DX_s\|_H^2-2\alpha^2 \|D (\sup_{0 \le s \le t}X_s) \|_H^2 \\
& \ge \frac{\sigma^2 t}{2}-2 \|b'\|_{{\rm \infty}}^2 t^2 \sup_{0 \le s \le t} \|DX_s\|_H^2-2\alpha^2 \sup_{0 \le s \le t}\|D X_s\|_H^2,
\end{split}
\Ees
where the last inequality is by \eqref{e:MalSupLes}.

This clearly implies
\Bes
\sup_{0 \le s \le t} \|DX_s\|_H^2 \ge \frac{\sigma^2 t}{2}-2\|b'\|_{{\rm \infty}}^2 t^2 \sup_{0 \le s \le t} \|DX_s\|_H^2-2\alpha^2 \sup_{0 \le s \le t} \|DX_s\|_H^2,
\Ees
which immediately yields the desired bound.
\end{proof}

\begin{lem} \label{t:SthDen}
Let $b$ is bounded smooth with $\|b'\|_\infty<\infty$ and $\sigma(x)\equiv \sigma$ with $\sigma \neq 0$.
If $\alpha<1$, $t_0>0$ and $b$ satisfy
$$\theta(t_0,\alpha,b)<1/2$$
with $\theta(r,\alpha,b):=\sqrt{2\|b'\|_{{\rm \infty}}^2 r^2+8 \alpha^2}+{\|b'\|_{{\rm \infty}}^2 r^2+4\alpha^2}$,
then
\begin{equation}
\|D X_{t}\|_H^2 \ge  \frac{[1-2 \theta(t_0,\alpha,b)] \sigma^2 t}{2 (1+2\|b'\|_{{\rm \infty}}^2 t^2+2 \alpha^2)}, \ \ \ \ \ t \in [0,t_0].
\end{equation}
\end{lem}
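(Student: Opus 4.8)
The plan is to combine the two preceding lemmas in a self-referential fashion: Lemma~\ref{l:DrXDifEst} controls the oscillation of the map $s \mapsto \|DX_s\|_H^2$ in terms of its own supremum, while Lemma~\ref{l:LowUppBouDrX} bounds that supremum from below. Writing $N_t := \sup_{0 \le s \le t}\|DX_s\|_H^2$, the first observation is that the bracketed quantity appearing in Lemma~\ref{l:DrXDifEst} is exactly $\theta(t_2-t_1,\alpha,b)$, so that lemma reads
\[
\left|\,\|DX_{t_2}\|_H^2 - \|DX_{t_1}\|_H^2\,\right| \le 2\,\theta(t_2-t_1,\alpha,b)\,N_t, \qquad 0 < t_1 < t_2 \le t.
\]

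First I would fix $t \in (0,t_0]$ and, for each $s \in [0,t]$, extract a lower bound for $\|DX_t\|_H^2$ in terms of $\|DX_s\|_H^2$. For $0 < s < t$ I apply the displayed inequality with $t_1 = s$ and $t_2 = t$; since $r \mapsto \theta(r,\alpha,b)$ is increasing on $[0,\infty)$ (both summands depend monotonically on $r^2$) and $t - s \le t \le t_0$, this gives
\[
\|DX_t\|_H^2 \ge \|DX_s\|_H^2 - 2\,\theta(t_0,\alpha,b)\,N_t.
\]
The two endpoint cases hold trivially: for $s = t$ the inequality is an identity weakened by the nonnegative term $2\theta(t_0,\alpha,b)N_t$, and for $s = 0$ it reduces to $\|DX_t\|_H^2 \ge -2\theta(t_0,\alpha,b)N_t$. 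Hence the bound is valid for \emph{every} $s \in [0,t]$, and, importantly, I never need the supremum defining $N_t$ to be attained nor any continuity of $s \mapsto \|DX_s\|_H^2$ (which is not obvious from Lemma~\ref{l:DrXDifEst}, whose estimate does not vanish as $t_2 - t_1 \to 0$ when $\alpha \ne 0$).

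Taking the supremum over $s \in [0,t]$ on the right-hand side, where the subtracted term is independent of $s$, then yields
\[
\|DX_t\|_H^2 \ge N_t - 2\,\theta(t_0,\alpha,b)\,N_t = \bigl[\,1 - 2\theta(t_0,\alpha,b)\,\bigr]\,N_t,
\]
and the hypothesis $\theta(t_0,\alpha,b) < 1/2$ makes the factor $1 - 2\theta(t_0,\alpha,b)$ strictly positive, so the bound is informative. Finally I would substitute the lower bound for $N_t$ furnished by Lemma~\ref{l:LowUppBouDrX} to obtain
\[
\|DX_t\|_H^2 \ge \bigl[\,1 - 2\theta(t_0,\alpha,b)\,\bigr]\cdot \frac{\sigma^2 t}{2\,(1 + 2\|b'\|_\infty^2 t^2 + 2\alpha^2)},
\]
which is precisely the assertion of the lemma.

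The argument is short, so there is no genuine analytic obstacle; the two points that require care are the monotonicity of $\theta$ in its time argument, which lets me replace $\theta(t-s,\alpha,b)$ by the uniform constant $\theta(t_0,\alpha,b)$, and the recognition of the self-referential structure in which the single quantity $N_t$ appears on both sides of the oscillation estimate. Once these are noticed, the supremum can be absorbed and the inequality closes with no further input beyond the two cited lemmas.
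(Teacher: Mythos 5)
Your proposal is correct and follows essentially the same route as the paper: apply Lemma~\ref{l:DrXDifEst} with $t_1=s$, $t_2=t$, use the monotonicity of $r\mapsto\theta(r,\alpha,b)$ to replace $\theta(t-s,\alpha,b)$ by $\theta(t_0,\alpha,b)$, absorb the supremum $N_t$ via the factor $1-2\theta(t_0,\alpha,b)>0$, and conclude with Lemma~\ref{l:LowUppBouDrX}. If anything, your write-up is slightly cleaner than the paper's, which leaves a dangling $\theta(t-s,\alpha,b)$ inside an inequality after the supremum over $s$ has already been taken.
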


\begin{proof} Let $t \in [0,t_0]$.
For all $0 \le t_1 \le t_2 \le t$, by Lemma \ref{l:DrXDifEst}, we have
\Bes
\left|\|D X_{t_2}\|_H^2-\|D X_{t_1}\|_H^2\right| \le 2\theta(t_2-t_1,\alpha,b)\sup_{0 \le s \le t} \left\|D X_s\right\|_H^2.
\Ees
Hence, for all $s \in [0,t]$,
\Bes
\begin{split}
\|D X_{s}\|_H^2 & \le \left|\|D X_{s}\|_H^2-\|D X_{t}\|_H^2\right|+\|D X_{t}\|_H^2  \\
& \le 2 \theta(t-s,\alpha,b)\sup_{0 \le s \le t} \left\|D X_s\right\|_H^2+\|D X_{t}\|_H^2,
\end{split}
\Ees
and consequently
\Bes
\begin{split}
\sup_{0 \le s \le t} \|D X_{s}\|_H^2 \le 2\theta(t-s,\alpha,b)\sup_{0 \le s \le t} \left\|D X_s\right\|_H^2+\|D X_{t}\|_H^2.
\end{split}
\Ees
The above inequality and \eqref{e:LowUppBouDrX} further give
\
\Bes
\begin{split}
\|D X_{t}\|_H^2 & \ge \left[1-2\theta(t-s,\alpha,b)\right]\sup_{0 \le s \le t} \|D X_{s}\|_H^2 \\
& \ge \left[1-2\theta(t_0,\alpha,b)\right]\sup_{0 \le s \le t} \|D X_{s}\|_H^2.
\end{split}
\Ees
Combining the above inequality and Lemma \ref{l:LowUppBouDrX} immediately gives the desired inequality.
\end{proof}
\ \
\subsection{Multiplicative noise case}
 By the condition of $\sigma$, we have $\sup_{x \in \R} \sigma(x)<0$ or $\inf_{x \in \R} \sigma(x)>0.$
Without loss of generality, we assume that
 $$\inf_{x \in \R} \sigma(x)>0.$$
Let us consider the following well known transform
\begin{equation}
F(X_t)=\int_x^{X_t} \frac{1}{\sigma(u)} \dif u,
\end{equation}
it is easy to see that $F$ is a strictly increasing function with bounded derivative. Hence,
\begin{equation} \label{e:SupF}
\sup_{0 \le s \le t} F(X_s)=F \left(\sup_{0 \le s \le t} X_s\right).
\end{equation}
By It\^{o} formula, we have
\begin{equation}
 F(X_t)=\int_0^t \left(\frac{b(X_s)}{\sigma(X_s)}-\frac 12 \sigma'(X_s)\right) \dif s+B_t+\alpha\int_0^t \frac{1}{\sigma(X_s)} \dif M_s
\end{equation}
where $M_t=\sup_{0 \le s \le t} X_s$. It is easy to see that $M_t$ is an increasing function of
$t$ and that $\frac{1}{\sigma(X_s)}$ has a contribution to the related integral only when $X_s=M_s$.
Hence,
\begin{equation}
 F(X_t)=\int_0^t \left(\frac{b(X_s)}{\sigma(X_s)}-\frac 12 \sigma'(X_s)\right) \dif s+B_t+\alpha\int_0^t \frac{1}{\sigma(M_s)} \dif M_s.
\end{equation}
Since $M_t$ is a continuous increasing function with respect to $t$, we have
\begin{equation}
 F(X_t)=\int_0^t \left(\frac{b(X_s)}{\sigma(X_s)}-\frac 12 \sigma'(X_s)\right) \dif s+B_t+\alpha \int_0^{M_t} \frac{1}{\sigma(u)} \dif u.
\end{equation}
By \eqref{e:SupF},
\begin{equation}
 F(X_t)=\int_0^t \left(\frac{b(X_s)}{\sigma(X_s)}-\frac 12 \sigma'(X_s)\right)\dif s+B_t+\alpha \sup_{0 \le s \le t} F(X_s).
\end{equation}
Denote $Y_t=F(X_t)$, it solves the following perturbed SDE:
\begin{equation}
Y_t=\int_0^t \tl b(Y_s) \dif s+ B_t+\alpha \sup_{0 \le s \le t} Y_s
\end{equation}
where $\tl b(x)=\frac{b(F^{-1}(x))}{\sigma(F^{-1}(x))}-\frac 12 \sigma'(F^{-1}(x))$.
Applying Lemma \ref{t:SthDen}, we get the following lemma about the dynamics $Y_t$:
\begin{lem} \label{t:SthDenY}
Assume that $b$ is bounded smooth and that $\sigma$ is bounded smooth with $\|\sigma'\|_\infty<\infty$, $\|\sigma''\|_\infty<\infty$  and  $\inf_{x \ge 0}|\sigma(x)|>0$. Then $\tl b$ is bounded smooth.
If $\alpha<1$, $t_0>0$ and $b$ satisfy
$$\theta(t_0,\alpha,\tl b)<1/2$$
with $\theta(r,\alpha,\tl b):=\left[\sqrt{2 \|\tl b'\|_{{\rm \infty}}^2 r^2+8 \alpha^2}+{\|\tl b'\|_{{\rm \infty}}^2 r^2+4\alpha^2}\right]$,
then
\begin{equation}
\|D Y_{t}\|_H^2 \ge  \frac{[1-2 \theta(t_0,\alpha,\tl b)] t}{2 (1+2\|\tl b'\|_\infty^2 t^2+2 \alpha^2)}, \ \ \ \ \ t \in [0,t_0].
\end{equation}
\end{lem}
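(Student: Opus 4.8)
The plan is to recognize that this lemma is essentially a direct corollary of Lemma~\ref{t:SthDen} applied to the transformed process $Y_t$. The computation preceding the statement shows that $Y_t=F(X_t)$ solves the perturbed SDE
\Bes
Y_t=\int_0^t \tl b(Y_s)\dif s+B_t+\alpha \sup_{0 \le s \le t} Y_s,
\Ees
which is exactly of the form \eqref{e:PerEqn} in the additive-noise regime, with constant diffusion coefficient $\sigma\equiv 1$ and drift $\tl b$. Consequently the Malliavin-derivative representation \eqref{e:MalDer}, and with it all of Lemmas~\ref{l:DrXDifEst}, \ref{l:LowUppBouDrX} and \ref{t:SthDen}, applies verbatim to $Y_t$ with $\sigma$ replaced by $1$ and $b$ replaced by $\tl b$. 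Thus the proof reduces to two steps: (i) verifying that $\tl b$ is bounded smooth with $\|\tl b'\|_\infty<\infty$, so that Lemma~\ref{t:SthDen} is applicable; and (ii) reading off its conclusion with $\sigma=1$.

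For step (i) I would first record the basic properties of the Lamperti-type transform $F$. Since $\inf_{x\in\R}|\sigma(x)|>0$ and $\sigma$ is smooth, $F(y)=\int_x^y \frac1{\sigma(u)}\dif u$ is a smooth, strictly increasing diffeomorphism of $\R$ with $F'=1/\sigma$, so that $F^{-1}$ is smooth with $(F^{-1})'=\sigma\circ F^{-1}$. Boundedness of $\tl b=\big(\tfrac{b}{\sigma}-\tfrac12\sigma'\big)\circ F^{-1}$ then follows from $\|b\|_\infty<\infty$, $\inf_{x\in\R}|\sigma(x)|>0$ and $\|\sigma'\|_\infty<\infty$. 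Differentiating by the chain rule gives
\Bes
\tl b'(x)=\Big[b'-\frac{b\,\sigma'}{\sigma}-\frac12\,\sigma''\sigma\Big]\big(F^{-1}(x)\big),
\Ees
and each of the three terms is bounded: the first by $\|b'\|_\infty$, the second by $\|b\|_\infty\,\|\sigma'\|_\infty/\inf_{x\in\R}|\sigma(x)|$, and the third by $\tfrac12\|\sigma''\|_\infty\|\sigma\|_\infty$. Hence $\|\tl b'\|_\infty<\infty$, while smoothness of $\tl b$ is immediate as a composition of smooth maps.

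For step (ii), the hypothesis $\theta(t_0,\alpha,\tl b)<1/2$ is precisely the hypothesis of Lemma~\ref{t:SthDen} with drift $\tl b$. Applying that lemma to $Y_t$ with $\sigma=1$ yields
\Bes
\|D Y_{t}\|_H^2 \ge \frac{[1-2\theta(t_0,\alpha,\tl b)]\,t}{2(1+2\|\tl b'\|_\infty^2 t^2+2\alpha^2)},\qquad t\in[0,t_0],
\Ees
which is the claimed bound.

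I expect the only genuine work to lie in step (i), namely confirming that $\tl b$ inherits a bounded derivative from the hypotheses on $b$ and $\sigma$; in particular the role of the second-derivative bound $\|\sigma''\|_\infty<\infty$ is exactly to control the term $-\tfrac12(\sigma''\sigma)\circ F^{-1}$ appearing in $\tl b'$. Once this regularity is in hand, the lower bound is a direct transcription of Lemma~\ref{t:SthDen} and requires no new estimates.
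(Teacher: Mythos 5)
Your proposal is correct and follows exactly the paper's route: the paper's own proof simply asserts that $\tl b$ is bounded smooth with $\|\tl b'\|_\infty<\infty$ and then invokes Lemma \ref{t:SthDen} for $Y_t$ with unit diffusion coefficient. Your explicit chain-rule computation of $\tl b'$ just fills in the detail the paper leaves as ``easy to check.''
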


\begin{proof}
It is easy to check that under the conditions in the lemma $\tl b$ is bounded smooth with $\|\tl b'\|_\infty<\infty$. Hence, the lemma immediately follows from applying Lemma \ref{t:SthDen} to $Y_t$.
\end{proof}

{\bf Acknowledgement}: We would like to gratefully thank Dr. Xiaobin Sun for helpful discussions.
\bibliographystyle{amsplain}

\end{document}